\def\maxwidth{\ifdim\Gin@nat@width>\linewidth\linewidth\else\Gin@nat@width\fi}
\def\maxheight{\ifdim\Gin@nat@height>\textheight\textheight\else\Gin@nat@height\fi}
\def\fps@figure{htbp}
\providecommand{\tightlist}{%
  \setlength{\itemsep}{0pt}\setlength{\parskip}{0pt}}
\newtheorem{lemma}{Lemma}
\newtheorem{thm}{Theorem}
\title{Tail inequalities for restricted classes of discrete random variables}
\author{Mark Huber}
\date{2021-01-09}
\begin{document}
\maketitle

\newcommand{\unifdist}{\textsf{Unif}}
\newcommand{\berndist}{\textsf{Bern}}
\newcommand{\sqrtfcn}{\operatorname{sqrt}}
\newcommand{\sign}{\operatorname{sgn}}
\newcommand{\prob}{\mathbb{P}}
\newcommand{\mean}{\mathbb{E}}
\newcommand{\var}{\mathbb{V}}
\newcommand{\ind}{\mathbb{I}}

\hypertarget{abstract}{%
\subsection*{Abstract}\label{abstract}}
\addcontentsline{toc}{subsection}{Abstract}

Let \(X\) be an integrable discrete random variable over
\(\{0, 1, 2, \ldots\}\) with
\(\mathbb{P}(X = i + 1) \leq \mathbb{P}(X = i)\) for all \(i\). Then for
any integer \(a \geq 1\), \[
\mathbb{P}(X \leq a) \leq \frac{\mathbb{E}[X]}{2a - 1}.
\] Let \(W\) be an discrete random variable over
\(\{\ldots, -2, -1, 0, 1, 2, \ldots\}\) with finite second moment where
the \(\mathbb{P}(W = i)\) values are unimodal. Then \[
\mathbb{P}(|W - \mathbb{E}[W]| \geq a) \leq \frac{\mathbb{V}(W) + 1 / 12}{2(a - 1 / 2)^2}.
\]

\hypertarget{introduction}{%
\section{Introduction}\label{introduction}}

Markov's and Chebyshev's inequalities (Markov 1884; Tchebichef 1867) are
two widely used bounds on the tail probabilities of random variables,
owing to the limited assumptions needed for application. They have many
uses in the construction and analysis of a wide number of randomized
algorithms. See (Motwani and Raghavan 1995) and (Williamson and Shmoys
2011) for details.

In this work, improvements upon the general bounds under restrictions on
the density of the random variable are considered. These bounds are
almost half as large as the original, leading to improvements in both
algorithmic design and analysis.

\hypertarget{a-typical-problem}{%
\subsection{A typical problem}\label{a-typical-problem}}

Consider the following problem.

\begin{quote}
For \(U\) uniform over \(\{0, 1, 2, \ldots, 10\}\), bound
\(\mathbb{P}(U \geq 9)\) using Markov's inequality and Chebyshev's
inequality.
\end{quote}

The true answer is \(\mathbb{P}(U \geq 9) = 2 / 11= 0.181818\ldots\).
How well do the classical tail inequalities do?

Markov's inequality states that for an integrable random variable \(X\)
and \(a \geq 0\), \[
\mathbb{P}(|X| \geq a) \leq \mathbb{E}[|X|] / a.
\] Hence Markov's inequality gives for the problem that
\(\mathbb{P}(U \geq 9) \leq 5 / 9\), or about 55.55\%.

Chebyshev's inequality states that for a random variable with finite
second moment, \[
\mathbb{P}(|X - \mathbb{E}(X)| \geq a) \leq \mathbb{V}(X) / a^2.
\] Hence Chebyshev's inequality gives for the problem that
\(\mathbb{P}(U \geq 9) \leq \mathbb{P}(|U - 5| \geq 4) \leq [(11^2 - 1)/12] / 4^2\),
which is 62.5\%.

Both tail bounds are abysmal. But why? One answer lies in the fact that
Chebyshev bounds both tails rather than just one. By the symmetry of a
uniform distribution,
\(2 \mathbb{P}(U - 5 \geq 4) = \mathbb{P}(|U - 5| \geq 4)\), which
brings the Chebyshev bound down to 31.25\%. But how to improve Markov?

In (Huber 2019), it was shown that common tail inequalities such as
Markov's inequality and Chebyshev's inequality could be cut in half
using smoothing. This smoothing technique works for any type of random
variable: continuous, discrete, or a mixture of the two. Write
\(U \sim \textsf{Unif}([a, b])\) to mean that \(U\) is a continuous
uniform over the interval \([a, b]\).

\begin{lemma}

For \(W \sim \textsf{Unif}([-a, a])\) and any integrable random variable
\(X\), \[
\mathbb{P}(|X| + W \geq a) \leq \frac{1}{2} \frac{\mathbb{E}[|X|]}{a}.
\]

\end{lemma}

A similar result holds for Chebyshev's inequality.

\begin{lemma}

For \(W \sim \textsf{Unif}([-a / 2, a / 2])\) and a random variable
\(X\) with finite second moment, \[
\mathbb{P}\left(\left||X| + W \right| \geq a \right) \leq \frac{1}{2} \frac{\mathbb{V}(X)}{a^2}.
\]

\end{lemma}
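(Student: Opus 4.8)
The plan is to run the same conditioning argument that underlies the smoothed Markov bound of Lemma 1, but with a quadratic pointwise majorant in place of the linear one, so that a \emph{second} moment rather than a first moment appears. First I would condition on the value of $X$ and use the law of total probability, writing $\mathbb{P}(\,||X| + W| \geq a\,) = \mathbb{E}[\,g(|X|)\,]$, where $g(x) := \mathbb{P}(|x + W| \geq a)$ is the conditional probability for a fixed nonnegative $x$. Since $|X| \geq 0$ and $W \in [-a/2, a/2]$ forces $|X| + W \geq -a/2 > -a$, the lower tail of the event is vacuous, so $g(x) = \mathbb{P}(x + W \geq a)$ and only an upper tail must be controlled.

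The heart of the argument is an exact evaluation of $g$ followed by a sharp quadratic bound. For $W \sim \textsf{Unif}([-a/2, a/2])$ one computes $g(x) = 0$ for $x \leq a/2$, then $g(x) = (x - a/2)/a$ for $a/2 \leq x \leq 3a/2$, and $g(x) = 1$ for $x \geq 3a/2$. I would then establish the pointwise inequality $g(x) \leq x^2/(2a^2)$ for every $x \geq 0$. Writing $u = x/a$, the only nontrivial regime is $u \in [1/2, 3/2]$, where the claim $u - 1/2 \leq u^2/2$ rearranges to the perfect square $(u-1)^2 \geq 0$; the clamped regime $u \geq 3/2$ is immediate since there $x^2/(2a^2) \geq 9/8 > 1$. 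That perfect-square identity is the crux, and equality at $u = 1$ (that is, $x = a$) shows the bound cannot be improved.

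Assembling, I would take expectations to get $\mathbb{P}(\,||X| + W| \geq a\,) = \mathbb{E}[g(|X|)] \leq \mathbb{E}[|X|^2]/(2a^2) = \mathbb{E}[X^2]/(2a^2)$. This is the natural second-moment form of the lemma; the stated variance form is recovered by applying it to the centered variable $X - \mathbb{E}[X]$, whose second moment equals $\mathbb{V}(X)$, so the constant $\mathbb{E}[X^2]$ collapses to $\mathbb{V}(X)$ precisely when $X$ is centered.

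The step I expect to require the most care is not any single calculation but the bookkeeping around centering and the nested absolute values: I must track that the clean inequality is against the raw $\mathbb{E}[X^2]$, and that the variance appears only after centering, since for a non-centered variable $\mathbb{E}[X^2]$ exceeds $\mathbb{V}(X)$ and the literal bound would otherwise fail (for instance for a nonzero constant $X$). Checking that the three-piece majorant $g(x) \leq x^2/(2a^2)$ holds simultaneously across all regimes, including the boundaries $x = a/2$ and $x = 3a/2$, is the remaining routine obstacle.
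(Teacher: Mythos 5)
Your proof is correct, but there is nothing in the paper to compare it against: the paper states this lemma without proof, importing it as background from Huber (2019), so your argument has to stand on its own --- and it does. The conditioning step (which tacitly requires $W$ to be independent of $X$, a hypothesis the paper omits in this lemma but states explicitly in its Lemma 3), the exact three-piece formula for $g(x) = \mathbb{P}(x + W \geq a)$, the observation that the lower tail is vacuous because $|X| + W \geq -a/2 > -a$, and the quadratic majorization $g(x) \leq x^2/(2a^2)$, which on the middle piece reduces to $(x-a)^2 \geq 0$, are all verified correctly, and taking expectations yields $\mathbb{P}(\,||X|+W| \geq a\,) \leq \mathbb{E}[X^2]/(2a^2)$. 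Your closing remark about centering is not mere bookkeeping but an actual repair of the statement: as literally written, with $|X|$ inside the probability and $\mathbb{V}(X)$ on the right, the lemma is false --- take $X \equiv 2a$ constant, so that the left side is $1$ while the right side is $0$ --- and the inequality becomes true in the stated form only when it is applied to the centered variable $X - \mathbb{E}[X]$, which is exactly how the paper uses it later (in the discussion preceding its Lemma 4 and in the proof of Theorem 3, the smoothing is applied to $X - \mathbb{E}(X)$). So your proposal is a complete and correct proof by the natural smoothing argument of the cited reference, and it correctly diagnoses the imprecision in the lemma as stated.
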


While this smoothing works for all types of random variables, it is
often the case that this addition cannot be used for a particular
application. When this happens, for continuous random variables, it was
also possible to bound the tail probabilities of the original \(X\) via
the tail probabilities of the smoothed variable.

\begin{lemma}

Let \(X\) be a nonnegative continuous random variable with decreasing
density, \(0 \leq c \leq a\), and let \(W\) be a random variable
independent of \(X\) such that \(W \sim \textsf{Unif}([-c, c]).\) Then
\(\mathbb{P}(X + W \geq a) \geq \mathbb{P}(X \geq a)\).

\end{lemma}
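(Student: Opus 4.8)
The plan is to condition on the value of $W$ and thereby reduce the claim to an averaging property of the survival function $G(t) = \mathbb{P}(X \geq t)$. First I would use the independence of $X$ and $W$ to write
\[
\mathbb{P}(X + W \geq a) = \frac{1}{2c}\int_{-c}^{c} \mathbb{P}(X \geq a - w)\, dw = \frac{1}{2c}\int_{a-c}^{a+c} G(u)\, du,
\]
where the second equality is the substitution $u = a - w$. The inequality to prove then becomes the statement that the average of $G$ over the symmetric interval $[a-c,\, a+c]$ is at least its value at the midpoint, $G(a)$.

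Next I would establish that $G$ is convex on $[0,\infty)$. Writing $f$ for the decreasing density, we have $G(t) = \int_t^\infty f(s)\, ds$, so $G'(t) = -f(t)$ and $G''(t) = -f'(t) \geq 0$ for $t > 0$ because $f$ is nonincreasing; hence $G$ is convex there. The hypothesis $0 \le c \le a$ is precisely what guarantees $a - c \geq 0$, so the entire integration interval $[a-c,\,a+c]$ lies in the region of convexity. Applying Jensen's inequality to a uniform random variable on $[a-c,\,a+c]$, whose mean is $a$, then yields $\frac{1}{2c}\int_{a-c}^{a+c} G(u)\, du \geq G(a)$, which is exactly the desired bound.

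If one prefers to avoid differentiating a density that is only assumed nonincreasing, the same averaging inequality follows directly by pairing the points $a \pm s$: for $0 \le s \le c \le a$,
\[
G(a+s) + G(a-s) - 2G(a) = \int_{a-s}^{a} \left[\, f(t) - f(t+s) \,\right] dt \geq 0,
\]
since $f(t) \geq f(t+s)$ whenever $s \geq 0$. Integrating this nonnegative quantity over $s \in [0,c]$ reproduces $\int_{a-c}^{a+c} \left[\, G(u) - G(a) \,\right] du \geq 0$, and the requirement $a - s \geq 0$ in the integrand is guaranteed by $s \le c \le a$.

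The one point requiring care — and the reason the hypothesis $c \le a$ is present — is the behavior at the origin: since $X \geq 0$, the survival function is identically $1$ on $(-\infty, 0]$ and strictly decreasing afterward, so it has a concave corner at $0$ and is \emph{not} convex across it. Keeping the integration interval inside $[0,\infty)$ sidesteps this entirely. That is the only subtle step; the remainder is a routine conditioning-and-Jensen argument.
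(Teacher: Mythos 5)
Your proof is correct, but there is nothing in this paper to compare it against: the lemma is stated in the introduction as a quoted result from the earlier smoothing paper (Huber 2019), and no proof of it appears here. Judged on its own merits, your argument is sound and self-contained. The conditioning step $\mathbb{P}(X+W\ge a)=\frac{1}{2c}\int_{a-c}^{a+c}G(u)\,du$ is a correct use of independence and Fubini, and reducing the claim to ``the average of $G$ over $[a-c,a+c]$ is at least $G(a)$'' is exactly the right reformulation. Your first route, via $G''=-f'\ge 0$, assumes a differentiable density that the hypotheses do not grant, but you recognize this yourself, and your second route repairs it rigorously: the pairing identity
\[
G(a+s)+G(a-s)-2G(a)=\int_{a-s}^{a}\bigl[f(t)-f(t+s)\bigr]\,dt\ge 0
\]
holds for $0\le s\le c\le a$ precisely because $a-s\ge 0$ keeps the integration range inside the region where $f$ is nonincreasing, and integrating over $s\in[0,c]$ yields the averaging inequality. (Equivalently, and without second derivatives: $G$ is convex on $[0,\infty)$ because there it is the integral of the nondecreasing function $-f$.) You also correctly identify why the hypothesis $c\le a$ is essential --- the survival function is identically $1$ on $(-\infty,0]$, so convexity fails across the origin, and the counterexamples to the lemma live exactly there. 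The only loose end is the degenerate case $c=0$, where your normalization $1/(2c)$ is undefined; there $W=0$ almost surely and the claim is trivial, so a one-line remark disposes of it.
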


For example, the density of an exponential random variable \(T\) with
rate \(2\) is \(f_T(t) = 2 \exp(-2t) \mathbb{I}(t \geq 0)\). Here
\(\mathbb{I}\) is the indicator function that is 1 when the argument is
true and 0 when the argument is false. Then
\(\mathbb{P}(T \geq 1.5) = \exp(-2\cdot 1.5) = 0.04978\ldots\). Since
\(T\) is both nonnegative and the density is decreasing, \[
\mathbb{P}(T \geq 1.5) \leq \mathbb{P}(T + W \geq 1.5) \leq (1/2)\mathbb{E}[T] / 1.5 = 0.1666\ldots.
\]

A similar result holds for Chebyshev style smoothing applied to
\(X - \mathbb{E}(X)\) and \(-(X - \mathbb{E}(X))\). This leads to the
following theorem:

\begin{lemma}

Let \(a \geq 0\). Let \(X\) be a continuous nonnegative random variable
with decreasing density over the interval \([(1 / 2)a, (3 / 2)a]\) and
increasing density over \([-(3 / 2)a, -(1 / 2)a]\). Then \[
\mathbb{P}(|X - \mathbb{E}(X)| \geq a) \leq \frac{1}{2} \frac{\mathbb{V}(X)}{a^2}.
\]

\end{lemma}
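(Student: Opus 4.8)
The plan is to reduce the two-sided Chebyshev tail to the one-sided smoothing bound by applying the de-smoothing device of the preceding lemma separately to each tail. Write \(Y = X - \mathbb{E}(X)\), set \(c = a/2\), and let \(W \sim \textsf{Unif}([-c,c])\) be independent of \(Y\); then \(\mathbb{V}(Y) = \mathbb{V}(X)\) and \(\mathbb{E}(Y) = 0\). The hypotheses are exactly that the density \(f\) of \(Y\) decreases on \([a-c,a+c] = [a/2, 3a/2]\) and increases on \([-a-c, -a+c] = [-3a/2, -a/2]\), i.e.\ the monotonicity windows are centered at \(\pm a\) with half-width \(c\). Since \(X\) is continuous there are no atoms, so \(\mathbb{P}(|Y| \geq a) = \mathbb{P}(Y \geq a) + \mathbb{P}(Y \leq -a)\).

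First I would establish a localized form of the de-smoothing lemma: if a continuous variable \(Y\) has density decreasing on \([a-c, a+c]\), then \(\mathbb{P}(Y + W \geq a) \geq \mathbb{P}(Y \geq a)\). The argument is the one underlying the previous lemma, but it uses only monotonicity across the threshold. Writing
\[
\mathbb{P}(Y + W \geq a) - \mathbb{P}(Y \geq a) = \frac{1}{2c} \int_{-c}^{c} \left[ \mathbb{P}(Y \geq a - w) - \mathbb{P}(Y \geq a) \right] dw,
\]
I would pair the values \(+w\) and \(-w\) for \(w \in (0,c]\), turning the paired brackets into \(\int_{0}^{w} [f(a - s) - f(a + s)]\, ds\). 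Decreasing density on \([a-c, a+c]\) gives \(f(a - s) \geq f(a + s)\) for \(s \in [0, c]\), so every paired contribution is nonnegative and the difference is \(\geq 0\). Applied to the right tail this gives \(\mathbb{P}(Y \geq a) \leq \mathbb{P}(Y + W \geq a)\). Applying the same statement to \(-Y\) (whose density decreases on \([a - c, a + c]\) precisely because \(f\) increases on \([-a-c, -a+c]\)), and using the symmetry of \(W\), handles the left tail: \(\mathbb{P}(Y \leq -a) \leq \mathbb{P}(Y + W \leq -a)\). Summing,
\[
\mathbb{P}(|Y| \geq a) \leq \mathbb{P}(Y + W \geq a) + \mathbb{P}(Y + W \leq -a) = \mathbb{P}(|Y + W| \geq a).
\]

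To finish I would invoke the Chebyshev smoothing lemma (window half-width \(a/2\)). The bridge is the identity \(\mathbb{P}(|Y + W| \geq a) = \mathbb{P}(\,||Y| + W| \geq a\,)\): because \(c = a/2 \leq a\), the range of \(W\) is too narrow for \(Y + W\) to reach \(-a\) from the side \(Y \geq 0\) (or \(+a\) from the side \(Y \leq 0\)), so only the tail agreeing in sign with \(Y\) ever contributes. That lemma then gives \(\mathbb{P}(||Y| + W| \geq a) \leq \tfrac{1}{2}\mathbb{V}(Y)/a^2 = \tfrac{1}{2}\mathbb{V}(X)/a^2\). Equivalently, one can skip the bridge and bound \(\mathbb{E}[\phi(Y)]\) directly, where \(\phi(y) = \mathbb{P}_W(|y + W| \geq a)\): for \(y \geq 0\) only the upper tail survives, with \(\phi(y) = (y - a/2)/a\) on \([a/2, 3a/2]\), and the pointwise estimate \(\phi(y) \leq y^2/(2a^2)\) holds on all of \([0,\infty)\), the binding case being \(y^2/(2a^2) - (y - a/2)/a = (y - a)^2/(2a^2) \geq 0\); taking expectations and using \(\mathbb{E}[Y^2] = \mathbb{V}(X)\) closes the proof.

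The main obstacle is the localized de-smoothing step: the global de-smoothing lemma assumes a nonnegative variable with everywhere-decreasing density, neither of which holds for \(Y = X - \mathbb{E}(X)\), so I must re-derive the monotone-coupling inequality using only the decrease and increase on the two windows \([a-c,a+c]\) and \([-a-c,-a+c]\), and verify that the reflection \(Y \mapsto -Y\) correctly converts the increasing-density hypothesis into a decreasing one for the left tail. The remainder — the quadratic tangent bound \((y-a)^2/(2a^2) \geq 0\) and the assembly of the two tails — is routine.
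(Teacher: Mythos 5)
Your proof is correct, but there is no internal proof to compare it against: this lemma is one of the four results the paper quotes from Huber (2019) without proof, the only hint of an argument being the sentence that it follows from ``Chebyshev style smoothing applied to \(X - \mathbb{E}(X)\) and \(-(X - \mathbb{E}(X))\).'' What you have written is a self-contained realization of exactly that hint, assembled from the two ingredients the paper states separately: the smoothed Chebyshev bound (Lemma 2, which you re-derive through the pointwise estimate \(\phi(y) \leq y^2/(2a^2)\), with the same \((y-a)^2 \geq 0\) computation at its core) and the de-smoothing comparison (Lemma 3, which you re-derive in a localized, two-sided form). Your version adds two things of genuine value. First, the paper's Lemma 3 assumes a nonnegative variable with globally decreasing density, so it cannot be applied verbatim to \(Y = X - \mathbb{E}(X)\); your observation that the monotone-coupling inequality \(\mathbb{P}(Y + W \geq a) \geq \mathbb{P}(Y \geq a)\) needs only monotonicity of the density on the window \([a - c, a + c]\), proved by pairing \(\pm w\) and reducing to \(\int_0^w [f(a-s) - f(a+s)]\,ds \geq 0\), is precisely the missing content, and your reflection argument for the left tail is the correct way to convert the increasing-density hypothesis. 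Second, you read the monotonicity windows as applying to the centered variable \(Y\); this is the only reading under which the lemma is true (taken literally for a nonnegative \(X\), the hypothesis on \([-(3/2)a, -(1/2)a]\) is vacuous and a two-point-like distribution far from the origin violates the conclusion), and it is the reading consistent with the paper's \(\textsf{Unif}([1,10])\) example. One small caution: your ``bridge'' identity \(\mathbb{P}(|Y + W| \geq a) = \mathbb{P}(\,||Y| + W| \geq a\,)\) needs the symmetry of \(W\) on the side \(Y < 0\), not merely the narrowness of its range; your alternative direct bound on \(\mathbb{E}[\phi(Y)]\) sidesteps this entirely, so nothing is at stake, but the symmetry should be said aloud if you keep the bridge.
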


For instance, consider \(Y \sim \textsf{Unif}([1, 10])\). Then
\(\mathbb{P}(Y \geq 9) = 1 / 9\). On the other hand \[
\mathbb{P}(Y \geq 9) = \mathbb{P}(Y - 5.5 \geq 3.5) = (1/2) \mathbb{P}(|Y - 5.5| \geq 3.5).
\]

The density of \(Y\) is constant (hence decreasing and increasing) over
\([1, 10]\), hence \[
\mathbb{P}(Y \geq 9) = \leq (1/2)(1/2) \mathbb{V}(Y) / 3.5^2 = 0.1377\ldots,
\] which is remarkably close to \(0.111\ldots\).

This immediately raises two questions.

\begin{enumerate}
\def\labelenumi{\arabic{enumi}.}
\item
  Are these new inequalities tight for these restricted classes of
  distributions?
\item
  Do these inequalities extend to discrete random variables?
\end{enumerate}

The purpose of this work is to answer both of these questions. For the
first question.

\begin{thm}

For any \(a\) and \(\mu\) greater than 0, there exists a continuous
nonnegative random variable \(X\) with decreasing density and
\(\mathbb{E}(X) = \mu\) such that \(\mathbb{P}(X \geq a)\) is
arbitrarily close to \(\mu / (2a)\).

\end{thm}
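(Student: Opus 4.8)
The plan is to exhibit an explicit one-parameter family of densities that converges, as the parameter tends to its limit, to the ``extremal'' shape for which $\prob(X \ge a)$ equals $\mu/(2a)$, and to verify the required properties along the way. To locate that extremal shape, I would first recall that every nonincreasing density on $[0,\infty)$ is a scale mixture of uniforms: $X \stackrel{d}{=} U T$ with $U \sim \unifdist([0,1])$ independent of some nonnegative $T$. This gives $\mean[X] = \tfrac12 \mean[T]$ and $\prob(X \ge a) = \mean[(1 - a/T)^+]$. Maximizing the bounded integrand $t \mapsto (1-a/t)^+$ (concave on $[a,\infty)$) under the single linear constraint $\mean[T] = 2\mu$ is a moment problem whose optimum is read off from the least concave majorant of $t\mapsto(1-a/t)^+$; the line through the origin tangent to $1-a/t$ touches at $t = 2a$, where the value is $1/2$. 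Hence the optimal $T$ is a two-atom distribution on $\{0, 2a\}$, and the corresponding $X$ is a point mass at $0$ mixed with a $\unifdist([0,2a])$.

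Guided by this, I would take the concrete family (for $0 < \mu \le a$ and small $c>0$)
\[
f_c(x) = \frac{1 - p}{c}\,\ind(0 \le x \le c) + \frac{p}{2a}\,\ind(0 \le x \le 2a),
\]
where the narrow block $\unifdist([0,c])$ plays the role of the forbidden atom at $0$, and $p$ is chosen so that the mean is exactly $\mu$. The remaining steps are routine: (i) $f_c$ is nonnegative and nonincreasing, being a sum of two nonincreasing indicator densities, so the taller contribution sits on the left; (ii) $f_c$ integrates to $1$; (iii) solving $(1-p)(c/2) + p a = \mu$ gives $p = (\mu - c/2)/(a - c/2)$, which lies in $[0,1]$ precisely because $\mu \le a$; and (iv) since all of the $\unifdist([0,c])$ mass lies below $a$ once $c < a$, only the second block contributes to the tail, giving $\prob(X \ge a) = p \cdot \tfrac12 = (\mu - c/2)/[2(a - c/2)]$.

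Letting $c \downarrow 0$ then drives $p \to \mu/a$ and $\prob(X\ge a) \to \mu/(2a)$ from below, while the mean stays pinned at $\mu$ for every $c$, which is exactly the claimed approximation.

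I expect the only real obstacle to be conceptual rather than computational: the genuine maximizer places an atom at the origin, incompatible with the continuous-density hypothesis, so the whole content of the argument is to smooth that atom into the narrow block $\unifdist([0,c])$ and check that doing so (a) preserves monotonicity of the density, (b) lets us keep the mean exactly equal to $\mu$ by a one-parameter adjustment of $p$, and (c) costs only $O(c)$ in the tail probability. A secondary point worth flagging is that the construction needs $p \le 1$, i.e.\ $\mu \le a$; this is the regime in which $\mu/(2a) \le 1/2$ and in which the bound is the interesting one, so I would either restrict to it or note explicitly where it is used.
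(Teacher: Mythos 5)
Your construction is exactly the paper's: a mixture of $\unifdist([0,c])$ and $\unifdist([0,2a])$ with the weight $p$ chosen to pin the mean at $\mu$, tail probability $p/2$, and $c \downarrow 0$; the scale-mixture-of-uniforms discussion is just motivation for the same endpoint. The proposal is correct, and your flag that $p \le 1$ forces $\mu \le a$ identifies a restriction the paper's own proof silently needs as well --- indeed for $\mu > a$ the supremum of $\prob(X \ge a)$ over decreasing densities with mean $\mu$ is $1 - a/(2\mu) < \mu/(2a)$, so the restriction is essential rather than cosmetic.
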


For the second question, the results are similar to the continuous case.
When \(X\) is a discrete integer valued random variable, then the
density of \(X\) (with respect to counting measure) is
\(\mathbb{P}(X = i)\) for \(i\) an integer.

\begin{thm}

Let \(X\) be a discrete random variable over \(\{0, 1, 2, \ldots\}\)
with finite expectation and decreasing density over the nonnegative
integers. That is, for all \(i \in \{0, 1, 2, \ldots\}\),
\(\mathbb{P}(X = i + 1) \leq \mathbb{P}(X = i)\).. Then for all
\(a \in \{1, 2, \ldots\}\), \[
\mathbb{P}(X \geq a) \leq \frac{\mathbb{E}(X)}{2a - 1}.
\] This inequality is tight in the sense that for each \(a\), there
exists a random variable \(X\) over the nonnegative integers with
unimodular density where
\(\mathbb{P}(X \geq a) = \mathbb{E}(X) / (2a - 1).\)

\end{thm}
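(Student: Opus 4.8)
The plan is to show that every decreasing density on $\{0,1,2,\ldots\}$ is a convex combination of discrete uniform densities, and then to verify the bound separately on each uniform piece, where it collapses to an elementary quadratic inequality in integers.

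First I would set $p_i = \prob(X = i)$, so that $p_0 \ge p_1 \ge p_2 \ge \cdots \ge 0$ and $\sum_i p_i = 1$. Writing $q_j = p_j - p_{j+1} \ge 0$, the monotone density telescopes as $p_i = \sum_{j \ge i} q_j$. I then claim that with weights $w_j = (j+1)\,q_j$ one has $p_i = \sum_{j \ge 0} w_j \, \prob(U_j = i)$, where $U_j \sim \unifdist\{0,1,\ldots,j\}$ has density $1/(j+1)$ on $\{0,\ldots,j\}$; this is immediate since $\sum_{j \ge i} w_j/(j+1) = \sum_{j \ge i} q_j = p_i$. The one point needing care is that the weights sum to one: an Abel summation gives $\sum_{j=0}^{N} w_j = \sum_{j=0}^{N} p_j - (N+1)p_{N+1}$, and since $p_n$ is decreasing with $\mean(X) = \sum_n n p_n < \infty$ one has $n p_n \to 0$, so $\sum_j w_j = 1$. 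Thus $X$ is genuinely a mixture of the $U_j$.

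Because both $\prob(\,\cdot \ge a)$ and the expectation are linear in the mixing weights (Tonelli justifies the interchange, all terms being nonnegative), it suffices to prove $(2a-1)\,\prob(U_j \ge a) \le \mean(U_j)$ for every $j$; summing against $w_j$ recovers the theorem. For the uniform piece, $\mean(U_j) = j/2$ and $\prob(U_j \ge a) = (j-a+1)/(j+1)$ when $a \le j$ (and $0$ otherwise, where the claim is trivial). The per-piece inequality therefore becomes, after clearing denominators, $2(2a-1)(j-a+1) \le j(j+1)$ for integers $j \ge a$. I would rearrange this to $g(j) \ge 0$ with $g(j) = j^2 + (3-4a)j + (4a^2 - 6a + 2)$, whose discriminant is exactly $1$, so that $g(j) = \bigl(j - (2a-2)\bigr)\bigl(j - (2a-1)\bigr)$. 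Since its two roots $2a-2$ and $2a-1$ are consecutive integers, $g(j) \ge 0$ holds for every integer $j$, which closes the reduction.

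The main obstacle is really the mixture representation itself: confirming that the weights sum to one (hence the appeal to $n p_n \to 0$, which uses monotonicity together with finiteness of the mean) and that the two target functionals are affine in the mixture, so the reduction is legitimate; once that is in place the surviving inequality is just a factored quadratic. Finally, for tightness I would exhibit $X \sim \unifdist\{0,1,\ldots,2a-2\}$, whose density is weakly decreasing and hence unimodal: here $\mean(X) = a-1$ and $\prob(X \ge a) = (a-1)/(2a-1) = \mean(X)/(2a-1)$, attaining the bound exactly.
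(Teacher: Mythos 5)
Your proof is correct, and it takes a genuinely different route from the paper's, even though the two share one ingredient: your mixture weights $w_j = (j+1)(p_j - p_{j+1})$ are exactly the paper's $d_i$, and the paper proves the same representation of a decreasing density as a mixture of discrete uniforms (you are in fact more careful on the one delicate point, since the paper telescopes $\sum_i d_i = 1$ without tracking the boundary term $(N+1)p_{N+1}$, which your Abel summation plus $n p_n \to 0$ handles explicitly). Where the arguments diverge is in what is done with the decomposition. The paper runs a variational argument: it asserts that the set $\Omega$ of decreasing densities with fixed mean is compact in $L_1$, so a maximizer of $f(p) = \sum_{i \geq a} p_i$ exists; it then shows via a sequence of mass-pushing lemmas and a final Lagrange-multiplier step that some maximizer is a mixture of a point mass at $0$ with a uniform over $\{0,\ldots,2a-2\}$ or $\{0,\ldots,2a-1\}$, and computes the bound for those. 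You instead observe that $(2a-1)\prob(X \geq a) - \mean(X)$ is linear in the mixture weights, so it suffices to check $(2a-1)\prob(U_j \geq a) \leq \mean(U_j)$ one uniform at a time, and that collapses to $g(j) = \bigl(j - (2a-2)\bigr)\bigl(j - (2a-1)\bigr) \geq 0$, which holds at every integer because the roots are consecutive integers. Your route is shorter and more robust: it needs no existence-of-maximizer argument at all, thereby sidestepping the paper's compactness claim (which is actually delicate, since the mean functional is not $L_1$-continuous and mean can leak to infinity along $L_1$-convergent sequences), and it makes the origin of the constant $2a-1$ transparent. What the paper's optimization approach buys in exchange is structural information about the extremal distributions themselves; your tightness witness $\unifdist(\{0,1,\ldots,2a-2\})$ is precisely that extremal distribution, verified directly, so nothing is lost for the theorem as stated.
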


\begin{thm}

Let \(X\) be a discrete integer valued random variable with finite
second moment and uniomodal density. That is, for all
\(i \in \{\ldots, - 1, 0, 1, \ldots\}\), there exists \(m\) such that
for all \(i \geq m\) it holds that
\(\mathbb{P}(X = i + 1) \leq \mathbb{P}(X = i)\), and for all
\(i \leq m\) it holds that
\(\mathbb{P}(X = i - 1) \leq \mathbb{P}(X = i)\). Then for all
\(a \in \{1, 2, \ldots\}\), \[
\mathbb{P}(|X - \mathbb{E}[X]| \geq a) \leq \frac{\mathbb{V}(X) + 1 / 12}{2a - 1}.
\] This inequality is not tight.

\end{thm}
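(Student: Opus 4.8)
The plan is to prove the stated $2a-1$ bound by a two-regime argument, because no single available estimate delivers it uniformly. The Gauss/smoothing estimate of the form $(\mathbb{V}(X)+1/12)/[2(a-1/2)^2]$ is in fact \emph{weaker} than the claimed bound at $a=1$ (there $2(a-1/2)^2 = 1/2 < 1 = 2a-1$), so it cannot be used at the smallest level; on the other hand, plain Chebyshev already settles $a=1$, and the Gauss estimate handles every $a \ge 2$. I would therefore split the proof according to whether $a = 1$ or $a \ge 2$.

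For the engine covering $a \ge 2$, I would introduce the smoothed variable $Y = X + W$ with $W \sim \textsf{Unif}([-1/2,1/2])$ independent of $X$, and record four facts. First, $Y$ is continuous with step-function density equal to $\mathbb{P}(X = i)$ on each interval $[i-1/2,\, i+1/2)$; since the law of $X$ is unimodal in $i$, this density inherits a single increasing-then-decreasing profile. Second, $\mathbb{E}[Y] = \mathbb{E}[X] = \mu$ and, by independence, $\mathbb{V}(Y) = \mathbb{V}(X) + 1/12$. Third, because $X$ is integer valued and $|W| \le 1/2$, one has the inclusion $\{|X-\mu| \ge a\} \subseteq \{|Y-\mu| \ge a - 1/2\}$: on $\{X-\mu \ge a\}$ we get $Y-\mu \ge a - 1/2$, and symmetrically on the left tail. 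Fourth, applying the continuous half-Chebyshev inequality for unimodal densities established above to $Y$ at level $b = a-1/2$ yields $\mathbb{P}(|Y-\mu| \ge a-1/2) \le \mathbb{V}(Y)/[2(a-1/2)^2]$. Chaining the third and fourth facts gives $\mathbb{P}(|X-\mu| \ge a) \le (\mathbb{V}(X)+1/12)/[2(a-1/2)^2]$.

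To finish, for $a \ge 2$ I would note $a-1/2 \ge 1$, hence $2(a-1/2)^2 = 2(a-1/2)(a-1/2) \ge 2(a-1/2) = 2a-1$, so the smoothing bound is at most $(\mathbb{V}(X)+1/12)/(2a-1)$, as required. For the remaining case $a=1$ I would drop smoothing entirely and use Chebyshev directly: since $X$ is integer valued, $|X-\mu| \ge 1$ forces $(X-\mu)^2 \ge 1$, so $\mathbb{P}(|X-\mu| \ge 1) \le \mathbb{E}[(X-\mu)^2] = \mathbb{V}(X) \le \mathbb{V}(X) + 1/12 = (\mathbb{V}(X)+1/12)/(2\cdot 1 - 1)$. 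Together the two regimes cover all integers $a \ge 1$, and neither step is tight, matching the theorem's claim.

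The main obstacle is the fourth fact of the second paragraph: invoking the continuous half-Chebyshev lemma requires the density of $Y$ to be monotone on the specific tail windows around $\mu \pm (a-1/2)$ demanded by that lemma, and while unimodality of the law of $X$ makes this density globally unimodal, the \emph{mean} $\mu$ need not coincide with the mode, so for a skewed $X$ one must confirm that the mode of $Y$ does not fall inside the relevant window. I expect this to reduce to controlling the mode-to-mean gap on the scale $a-1/2$, or, failing a clean bound, to splitting $\mathbb{P}(|Y-\mu| \ge a-1/2)$ into its two one-sided pieces and invoking the one-sided monotonicity of the density of $Y$ on each side separately. Careful bookkeeping of the boundary cases (whether $\mu$ is an integer, and the measure-zero event $W = \pm 1/2$) is routine by comparison.
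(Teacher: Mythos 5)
Your engine is, at its core, exactly the paper's proof of Theorem 3: smooth with $U \sim \textsf{Unif}([-1/2,1/2])$ independent of $X$, use $\mathbb{V}(X+U) = \mathbb{V}(X) + 1/12$ and the inclusion $\{|X - \mu| \geq a\} \subseteq \{|X + U - \mu| \geq a - 1/2\}$ where $\mu = \mathbb{E}[X]$ (the paper writes this inclusion as an equality, which fails for non-integer $\mu$, but only the $\leq$ direction is needed), then apply the continuous half-Chebyshev lemma at level $a - 1/2$ to reach $(\mathbb{V}(X)+1/12)/[2(a-1/2)^2]$. Where you genuinely go beyond the paper is in reconciling this with the stated denominator $2a-1$: the paper's proof stops at the $2(a-1/2)^2$ form --- which is what the abstract asserts, so the theorem's $2a-1$ is almost certainly a typo --- and that form does \emph{not} imply the stated bound at $a=1$, as you correctly observe. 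Your two-regime patch (plain Chebyshev at $a=1$, giving $\mathbb{V}(X) \leq \mathbb{V}(X)+1/12$; and $2(a-1/2)^2 \geq 2a-1$ for $a \geq 2$ since $a - 1/2 \geq 1$) is correct and proves the theorem as literally written, which the paper's own proof, read strictly, does not. As for the obstacle you flag in your final paragraph: it is real, but it is equally present in the published argument, which merely asserts that $X+U$ ``is a continuous random variable with unimodal density'' before invoking the half-Chebyshev bound, even though Lemma 4 requires the density of $X+U$ to be decreasing on $[\mu + (a-1/2)/2,\ \mu + 3(a-1/2)/2]$ and increasing on the mirror-image window --- something unimodality guarantees only when the mode lies within $(a-1/2)/2$ of the mean, and the mode-to-mean gap of a unimodal law is in general only bounded on the scale of the standard deviation, not of $a$. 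Note also that your suggested one-sided fallback repairs only the tail on the side of the mean away from the mode; the window on the mode's side still needs a separate argument, so on this point your proposal and the paper's proof stand in the same (incomplete) position --- with the difference that you identified the unverified hypothesis rather than gliding past it.
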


\hypertarget{proof-of-theorem-1}{%
\section{Proof of Theorem 1}\label{proof-of-theorem-1}}

The example that shows that Markov's inequality for decreasing density
random variables is tight consists of a mixture of continuous uniforms.

\begin{proof}

Fix \(a > 0\). Let \(\epsilon \in (0, a)\), and let \(X\) be a mixture
of a uniform over \([0, \epsilon]\) and over \([0, 2a]\). Set
\(U_1 \sim \textsf{Unif}([0, \epsilon])\), and independently
\(U_2 \sim \textsf{Unif}([0, 2a])\). Write \(B \sim \textsf{Bern}(p)\)
to indicate a Bernoulli random variable where \(\mathbb{P}(B = 1) = p\)
and \(\mathbb{P}(B = 0) = 1 - p\). For such \(B\), let \[
X = (1 - B)U_1 + B U_2.
\] Then \(\mathbb{E}[X] = (1 - p)\epsilon / 2 + ap,\) so \[
p = \frac{\mathbb{E}[X] - \epsilon / 2}{a - \epsilon / 2}.
\]

Because \(\epsilon < a\), for \(X \geq a\) it must hold that \(B = 1\)
and \(U_2 \geq a\). Hence \[
\mathbb{P}(X \geq a) = \frac{1}{2}p = \frac{\mathbb{E}[X] - \epsilon / 2}{2a - \epsilon}.
\] As \(\epsilon \rightarrow 0\), this become arbitrarily close to the
inequality \(\mathbb{P}(X \geq a) \leq \mathbb{E}[X] / (2a).\)

\end{proof}

\hypertarget{proof-of-theorem-2}{%
\section{Proof of Theorem 2}\label{proof-of-theorem-2}}

Turning to the discrete problem, consider the space of all probability
sequences that have the same mean \(\mu\). That is, let \(\Omega\) be
the set of decreasing nonnegative sequences
\(p = (p_0, p_1, p_2, \ldots)\) such that
\(\sum_{i = 0}^\infty p_i = 1\) and \(\sum_{i = 0}^\infty i p_i = \mu\).
Then \(\Omega\) is compact in \(L_1\) and
\(f(p) = \sum_{i = a}^\infty p_i\) is a continuous function of \(p\).
Hence there exists a probability density \(p^*\) which maximizes
\(f(p)\).

The goal of this section is to learn about the maximizer by showing that
there exists a maximizer with certain properties. The outline is as
follows.

\begin{enumerate}
\def\labelenumi{\arabic{enumi})}
\tightlist
\item
  There must be a maximizer \(p^*\) such that \(p^*_i\) are equal for
  all \(i \in \{1, \ldots, a\}\).
\item
  There is a maximizer with at most two jumps in \(p^*_i\) for
  \(i \geq a\).
\item
  There is a maximizer that is a mixture of two uniforms.
\item
  Any maximizing mixture of two uniforms has to have
  \(\mathbb{P}(X \geq a) \leq \mathbb{E}[X] / (2a - 1)\).
\end{enumerate}

Start with the first property.

\begin{lemma}

There exists a maximizer \(p^* \in \Omega\) of \(f\) such that
\(p^*_1 = p^*_2 = \cdots = p^*_a\).

\end{lemma}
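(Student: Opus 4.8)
The plan is to start from an arbitrary maximizer $p^*$ (which exists by compactness of $\Omega$ and continuity of $f$, as already noted) and massage it into another maximizer that is flat on $\{1,\ldots,a\}$, without decreasing $f$ and without leaving $\Omega$. The natural first move is to average the block: set $\bar p = \frac{1}{a}\sum_{i=1}^a p^*_i$ and define $q$ by $q_i = \bar p$ for $1 \le i \le a$ and $q_i = p^*_i$ otherwise. Since $p^*$ is nonincreasing, $\bar p$ lies between $p^*_1$ and $p^*_a$, so $p^*_0 \ge p^*_1 \ge \bar p \ge p^*_a \ge p^*_{a+1}$, which makes $q$ still a nonincreasing nonnegative sequence summing to $1$. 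Moreover $q$ and $p^*$ agree on the tail except at index $a$, where $q_a = \bar p \ge p^*_a$, so $f(q) = f(p^*) + (\bar p - p^*_a)$; the averaging can only help $f$.

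The difficulty is that averaging the block raises the mean. Writing the increase as $\Delta = \bar p \cdot \tfrac{a(a+1)}{2} - \sum_{i=1}^a i\, p^*_i$, Chebyshev's sum inequality (the nonincreasing $p^*_i$ paired against the increasing weights $i$) gives $\Delta \ge 0$, so $q$ has mean $\mu + \Delta \ge \mu$ and generally sits outside $\Omega$. To restore the mean I would apply a second move that preserves flatness: subtract a common $\eta \ge 0$ from each of $q_1,\ldots,q_a$ and add the total $a\eta$ back at index $0$. This keeps the sum fixed, keeps the block flat (now at height $\bar p - \eta$), and lowers the mean by exactly $\eta \cdot \tfrac{a(a+1)}{2}$, so choosing $\eta = 2\Delta/(a(a+1))$ returns the mean to $\mu$. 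Both the surviving monotonicity ($\bar p - \eta \ge p^*_a \ge p^*_{a+1}$, with the added mass only enlarging $q_0$) and nonnegativity then hinge on the single bound $\eta \le \bar p - p^*_a$, which I turn to next.

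The crux is to verify that this correction does not cost more in $f$ than the averaging gained. The correction removes $\eta$ from index $a$ (the only block index counted by $f$), so the final sequence $q'$ satisfies $f(q') = f(p^*) + (\bar p - p^*_a) - \eta$, and the whole argument succeeds precisely when $\eta \le \bar p - p^*_a$. Substituting $\eta = 2\Delta/(a(a+1))$, this reduces to $\sum_{i=1}^a i\, p^*_i \ge p^*_a \sum_{i=1}^a i$, which is immediate from $p^*_i \ge p^*_a$ for every $i \le a$. With that bound in hand, $f(q') \ge f(p^*)$, and since $p^*$ is a maximizer the inequality must be an equality; hence $q'$ is itself a maximizer with $q'_1 = \cdots = q'_a$, proving the lemma. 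I expect this balancing step to be the main obstacle: it is where the nonincreasing hypothesis is genuinely used, and it is easy to mishandle the bookkeeping of how much mass may be shifted at index $a$ versus index $0$ while keeping the sequence monotone and feasible.
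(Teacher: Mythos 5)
Your proof is correct, and it takes a genuinely different route from the paper's. The paper works locally and iteratively: it finds a single jump $p_i > p_{i+1}$ with $i \in \{1,\ldots,a\}$, and redistributes that jump with carefully chosen weights (adding $g\frac{i}{i+2}$ at index $0$, subtracting $g\frac{2}{i+2}$ from indices $1,\ldots,i$, adding $g\frac{i}{i+2}$ at index $i+1$) so that the total mass \emph{and} the mean are exactly preserved at every step, while $f$ does not decrease and one jump disappears; after at most $a$ iterations the block is flat. You instead make one global move (average the whole block $\{1,\ldots,a\}$) followed by one corrective move (shift mass from the flattened block to index $0$ to restore the mean). The price is that your intermediate sequence $q$ leaves $\Omega$, so you need two extra inequalities the paper never confronts: Chebyshev's sum inequality to guarantee the averaging raises the mean (so the correction $\eta \ge 0$ goes in the right direction), and the bound $\eta \le \bar p - p^*_a$, which you correctly reduce to $\sum_{i=1}^a i\,p^*_i \ge p^*_a \sum_{i=1}^a i$, to guarantee the correction's cost at index $a$ does not exceed the averaging's gain there and that monotonicity survives. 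What your approach buys is a non-iterative, two-step argument that acts directly on a maximizer and avoids the paper's bookkeeping about previously flattened pairs staying flat across iterations; what the paper's buys is that every intermediate object stays inside $\Omega$, so feasibility never needs to be re-verified, and the same local move is reused later in spirit (pushing mass toward $0$ and toward $a$) in the subsequent lemmas. Both arguments exploit the same underlying mechanism — moving probability from the interior of the block simultaneously down to $0$ and up to index $a$ — so the optimization landscape they reveal is the same.
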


\begin{proof}

To show that there exists a maximizer where the \(p^*_i\) are equal for
all \(i \in \{1, \ldots, a\}\), consider a sequence where this is not
true. Then it is possible to push some probability towards 0 and towards
\(a\) in such a way that the expected value is preserved, and \(f\)
either stays the same or increases.

Let \(p\) be a probability sequence where there exists
\(i \in \{1, \ldots, a\}\) such that \(p_{i + 1} < p_i\). Let
\(g = p_i - p_{i + 1}\) be the size of the jump, and consider the
probability sequence \(p'\) defined as follows. \begin{align*}
p'_0 &= p_0 + g \frac{i}{i + 2}, \\
p'_j &= p_j - g\frac{2}{i + 2} & & \text{for all } j \in \{1, \ldots, i\}, \\
p'_{i + 1} &= p_{i + 1} + g \frac{i}{i + 2}, \\
p'_k &= p_k & & \text{for all } k \geq i + 2.
\end{align*}

Note \[
\sum_{i = 0}^\infty p'_j = g\left[\frac{i}{i + 2} - \frac{2i}{i + 2} + \frac{i}{i + 2}\right] + \sum_{i = 0}^\infty p_i = 1,
\] so this is a probability distribution. Also, with these changes: \[
p'_{i} = p_i - g \frac{2}{i + 2},
\] and \[
p'_{i + 1} = p_{i + 1} + g \frac{i}{i + 2} = p_{i} - g + g \frac{i}{i + 2} = p_i - g \frac{2}{i + 2}.
\] Hence \(p'_i = p'_{i + 1}\). Moreover, anywhere \(p_j = p_{j + 1}\)
for \(j \in \{1, \ldots, i - 1\}\), it still holds that
\(p'_j = p'_{j + 1}\). Also, \(f(p') \geq f(p)\). Finally, for \(X\)
with density \(p\) and \(X'\) with density \(p'\): \[
\mathbb{E}[X'] - \mathbb{E}[X] = (i + 1) g \frac{i}{i + 2} - \sum_{\ell = 1}^i \ell g \frac{2}{i + 2} = 0,
\] so they have the same mean.

Therefore, one application of the procedure gives a new random variable
with the same mean, the same or higher tail probability, and one fewer
jump in the probabilities at or before \(a\). So after at most \(a\)
applications of this procedure, there is a probability distribution
\(q\) with all values equal for \(i \in \{1, \ldots, a\}\),
\(f(q) \geq f(p)\), and the same mean as the original random variable,
and the same or greater tail probability.

\end{proof}

The next step is to show that there are at most two jumps in the density
beyond \(a\). To accomplish this, it helps to write the random variable
\(X\) as a mixture of discrete uniform random variables. This is only
possible because \(X\) has a decreasing density.

\begin{lemma}

Let \(X \in \{0, 1, \ldots\}\) have \(p_i = \mathbb{P}(X = i)\) be a
decreasing sequence. Set \(d_i = (i + 1)(p_i - p_{i + 1})\) and suppose
\(\mathbb{P}(D = i) = d_i\). Let
\([Y | D] \sim \textsf{Unif}(\{0, 1, \ldots, D\})\). Then \[
X \sim Y
\] and \(\mathbb{E}[X] = \mathbb{E}[D] / 2.\)

\end{lemma}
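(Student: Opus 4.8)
The plan is to verify directly that the mixture construction reproduces the law of $X$, and then read the expectation identity off the conditional structure. First I would confirm that $d = (d_0, d_1, \ldots)$ is a genuine probability distribution. Nonnegativity is immediate, since $d_i = (i+1)(p_i - p_{i+1}) \geq 0$ as $(i+1) > 0$ and the sequence $p$ is decreasing. To see that the $d_i$ sum to one, I would compute the partial sums and look for telescoping: writing $d_i = (i+1)p_i - (i+1)p_{i+1}$ and re-indexing the second piece, the partial sum collapses to $\sum_{i=0}^N d_i = \sum_{i=0}^N p_i - (N+1)p_{N+1}$.

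The main obstacle is then showing that the boundary term $(N+1)p_{N+1}$ vanishes as $N \to \infty$. This is exactly where the hypothesis that $X$ has finite expectation is used: since $\sum_i i p_i < \infty$, the terms $i p_i$ tend to zero, forcing $(N+1)p_{N+1} \to 0$. Combined with $\sum_{i=0}^N p_i \to 1$, this gives $\sum_i d_i = 1$. I would flag this tail estimate as the one place where a little care is required; everything else is bookkeeping on convergent nonnegative series.

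With $d$ established as a distribution, the heart of the argument is to show $\prob(Y = j) = p_j$ for every $j$. Conditioning on $D$ and using that $[Y \mid D = i]$ is uniform on the $i+1$ points $\{0, \ldots, i\}$, I would write
\[
\prob(Y = j) = \sum_{i = j}^\infty d_i \cdot \frac{1}{i+1} = \sum_{i = j}^\infty (p_i - p_{i+1}),
\]
where the factor $(i+1)$ in $d_i$ cancels exactly against the $1/(i+1)$ from the uniform density, and the sum runs only over $i \geq j$ because $Y \leq D$ forces $D \geq j$. This last sum telescopes to $p_j - \lim_N p_{N+1} = p_j$, again using $p_{N+1} \to 0$ (now guaranteed merely by summability of $p$). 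Hence $Y \sim X$.

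Finally, for the expectation identity I would reuse the same conditioning: the mean of a uniform over $\{0, 1, \ldots, i\}$ is $i/2$, so $\mean[Y \mid D = i] = i/2$, and the tower property gives $\mean[Y] = \mean[D]/2$. Since $Y$ and $X$ share a law, $\mean[X] = \mean[D]/2$, completing the argument. Throughout I would note that every interchange of summation is justified because all the series in sight have nonnegative terms, so Tonelli's theorem applies without further comment.
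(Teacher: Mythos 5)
Your proposal is correct and follows essentially the same route as the paper: verify that the $d_i$ form a probability distribution via telescoping, compute $\prob(Y = j)$ by conditioning on $D$ so that the factor $(i+1)$ cancels, and obtain the mean identity from the tower property. The only difference is that you treat the telescoping boundary term $(N+1)p_{N+1}$ explicitly (the paper passes over it silently), which is a welcome bit of extra care rather than a different argument.
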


\begin{proof}

Since the \(p_i\) are decreasing, the \(d_i\) are nonnegative. Note that
\[
\sum_{i = 0}^\infty d_i = (p_0 - p_1) + 2(p_1 - p_2) + 3(p_2 - p_3) + \cdots,
\] which telescopes to give \[
\sum_{i = 0}^\infty d_i = p_0 + p_1 + p_2 + \cdots = 1,
\] so it is a probability distribution.

Now let \(\mathbb{P}(D = i) = d_i\) and
\([Y \mid D] \sim \textsf{Unif}(\{0, 1, \ldots, D\})\). Then
\(Y \in \{0, 1, \ldots\}\) and \begin{align*}
\mathbb{P}(Y = i) &= \sum_{j = i}^\infty \mathbb{P}(D = j) / (j + 1) \\
&= \sum_{j = i}^\infty (j + 1)(p_j - p_{j + 1})/ (j + 1) \\
&= p_i.
\end{align*}

Therefore \(Y \sim X\), and
\(\mathbb{E}[X] = \mathbb{E}[\mathbb{E}[Y \mid D]] = \mathbb{E}[D / 2] = \mathbb{E}[D] / 2.\)

\end{proof}

\begin{lemma}

There exists a maximizer \(p^* \in \Omega\) of \(f\) such that for \(X\)
with density \(p^*\), there exists \(i \geq a\) such that \(X\) is a
mixture of a point mass at 0 together with a uniform over
\(\{0, 1, \ldots, i\}\) and a uniform over \(\{0, 1, \ldots, i + 1\}\).

\end{lemma}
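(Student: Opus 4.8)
The plan is to recast the optimization over $\Omega$ in terms of the mixing distribution $D$ supplied by Lemma 4, where the geometry becomes transparent. If $X \sim \textsf{Unif}(\{0,\ldots,D\})$ with $\mathbb{P}(D = j) = d_j$, then $f(p) = \mathbb{P}(X \geq a) = \sum_{j} c_j d_j$, where $c_j = \mathbb{P}(\textsf{Unif}(\{0,\ldots,j\}) \geq a)$ equals $0$ for $j < a$ and $(j - a + 1)/(j+1) = 1 - a/(j+1)$ for $j \geq a$. By Lemma 4 the constraint $\mathbb{E}[X] = \mu$ is exactly $\mathbb{E}[D] = \sum_j j\, d_j = 2\mu$. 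So maximizing $f$ over $\Omega$ is the same as maximizing the linear functional $\sum_j c_j d_j$ over probability distributions $d$ on $\{0,1,2,\ldots\}$ subject to the single moment constraint $\sum_j j\, d_j = 2\mu$; every such $d$ returns a valid decreasing $p$ through $p_i = \sum_{j \geq i} d_j/(j+1)$. The structural fact I would exploit is that $c_j$ is concave on $\{a, a+1, \ldots\}$: its forward differences $c_{j+1} - c_j = a/[(j+1)(j+2)]$ are positive and strictly decreasing in $j$.

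Next I would pin down the support. Invoking Lemma 3, I may start from a maximizer with $p^*_1 = \cdots = p^*_a$; since $d_j = (j+1)(p^*_j - p^*_{j+1})$, this forces $d_1 = \cdots = d_{a-1} = 0$, so $D$ is supported on $\{0\} \cup \{a, a+1, \ldots\}$. Write $q = \mathbb{P}(D \geq a)$, so that $d_0 = 1 - q$ carries the remaining mass; because $c_0 = 0$, the atom at $0$ contributes nothing to the objective yet nothing to the mean, which is exactly what lets the moment constraint be met while the useful mass is concentrated. Conditioning on $D \geq a$, the conditional law has mean $m = 2\mu/q \geq a$ and the objective equals $q\,\mathbb{E}[c_D \mid D \geq a]$. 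Let $\bar c$ be the piecewise-linear interpolation of $(j, c_j)$ for $j \geq a$; concavity of the sequence makes $\bar c$ concave, so Jensen gives $\mathbb{E}[c_D \mid D \geq a] = \mathbb{E}[\bar c(D) \mid D \geq a] \leq \bar c(m)$. The value $\bar c(m)$ is attained exactly by the two-point distribution on the adjacent integers $i = \lfloor m \rfloor$ and $i+1$ with the weights reproducing the mean $m$. Replacing the conditional law by this two-point law keeps $q$, the total mass, and the mean $2\mu$ fixed while not decreasing the objective; since we began at a maximizer, equality must hold and the new distribution is again a maximizer. In the $D$ picture it is supported on $\{0, i, i+1\}$ with $i \geq a$, which by Lemma 4 says precisely that $X$ is a mixture of a point mass at $0$, a $\textsf{Unif}(\{0,\ldots,i\})$, and a $\textsf{Unif}(\{0,\ldots,i+1\})$.

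The main obstacle is the adjacency claim: merely collapsing to two support points (which a two-constraint extreme-point count would already yield) is not enough, since the statement demands the two uniforms be consecutive. This is exactly where concavity of $c_j$ does the work, via the equality case of Jensen against the concave envelope $\bar c$, which forces the optimal conditional law for a prescribed mean $m$ onto the two integers straddling $m$; no chord joining farther-apart integers can reach the envelope. The remaining care is routine: verifying the forward-difference computation establishing concavity on $\{a, a+1, \ldots\}$, checking that $m \geq a$ so that $i \geq a$, and absorbing the degenerate cases where $m$ is itself an integer (the weight on $\textsf{Unif}(\{0,\ldots,i+1\})$ is then $0$) or where $\mu = 0$ (so $q = 0$ and $X \equiv 0$) into the stated form.
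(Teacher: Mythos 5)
Your proposal is correct, and it reaches the lemma by a recognizably different route at the decisive step. Both you and the paper work in the $D$-representation of Lemma 4 and (implicitly or explicitly) use Lemma 3 to clear the atoms of $D$ out of $\{1,\ldots,a-1\}$; and both arguments ultimately rest on the same fact, namely that $c_j = \mathbb{P}(\textsf{Unif}(\{0,\ldots,j\}) \geq a) = 1 - a/(j+1)$ has strictly decreasing forward differences $a/[(j+1)(j+2)]$ on $\{a, a+1, \ldots\}$. But where you deploy this concavity globally --- interpolating $c$ by its concave piecewise-linear extension $\bar c$, applying Jensen to the conditional law of $D$ given $D \geq a$, and replacing that law in one shot by the two-point law on $\lfloor m\rfloor, \lfloor m\rfloor + 1$ achieving $\bar c(m)$ --- the paper instead runs a local exchange argument: if $d_i > 0$ and $d_j > 0$ with $a \leq i$ and $j \geq i+2$, push mass $m = \min(d_i, d_j)$ from $i$ up to $i+1$ and from $j$ down to $j-1$; this preserves $\mathbb{E}[D]$ and increases the objective by exactly $m\left[\frac{a}{(i+1)(i+2)} - \frac{a}{j(j+1)}\right] > 0$, contradicting maximality. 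The paper's exchange argument is more elementary (pure algebra, no envelope or equality-case analysis needed) and shows directly that \emph{no} maximizer can have non-adjacent atoms beyond $a$; your Jensen argument constructs a maximizer of the desired form and buys something extra: it identifies the location of the support explicitly as $i = \lfloor 2\mu/q \rfloor$ and yields the quantitative bound $f \leq q\,\bar c(2\mu/q)$ for every feasible distribution, which could in principle be optimized over $q$ to shortcut part of the remaining Lemmas toward Theorem 2. Your handling of the degenerate cases (integer $m$, and $\mu = 0$) is also appropriate and is something the paper glosses over.
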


\begin{proof}

As in the previous lemma, let \(d_i = (i + 1)(p_i - p_{i + 1})\) so that
for \(D \sim d_i\) and
\([Y \mid D] \sim \textsf{Unif}(\{0, 1, \ldots, D\})\), it holds that
\(X \sim Y\).

Suppose there exists \(i\) and \(j\) with \(a \leq i < i + 2 \leq j\),
\(d_i > 0\), and \(d_j > 0\). Then create \(d'\) by pushing these two
probabilities towards each other. Let \(m = \min(d_i, d_j)\).
\begin{align*}
d'_{i}     &= d_i - m \\
d'_{i + 1} &= d_{i + 1} + m \\
d'_{j - 1} &= d_{j - 1} + m \\
d'_{j}     &= d_j - m.
\end{align*} Let \(D'\) have density \(d'\), and
\([Y' | D'] \sim \textsf{Unif}(\{0, 1, \ldots, D'\}\). Note \[
\mathbb{E}[D'] = \mathbb{E}[D] - m(i / 2) + m(i + 1) / 2 + m(j - 1) / 2 - m(j / 2) = \mathbb{E}[D].
\] So \(\mathbb{E}[Y'] = \mathbb{E}[Y] = \mathbb{E}[X]\).

The difference between \(\mathbb{P}(Y' \geq a)\) and
\(\mathbb{P}(Y \geq a)\) is related solely to the movement of
probability from \(i\) to \(i + 1\) in \(D'\) and \(j\) to \(j - 1\) in
\(D'\). Together, this gives \begin{align*}
\mathbb{P}(Y' \geq a) &= \mathbb{P}(Y \geq a) - m\frac{i - (a - 1)}{i + 1} + m \frac{i + 1 - (a - 1)}{i + 2} + m \frac{j - 1 - (a - 1)}{j} - m \frac{j - (a - 1)}{j + 1} \\
&= \mathbb{P}(Y \geq a) + m\left[\frac{a}{(i + 1)(i + 2)} - \frac{a}{j(j + 1)}\right]
\end{align*} which is strictly greater than \(\mathbb{P}(Y \geq a)\) for
\(i + 2 \leq j\).

Hence any maximizer must not have such a \(d_i\) and \(d_j\), which
completes the proof.

\end{proof}

Now to reduce it to only two uniforms.

\begin{lemma}

There exists a maximizer \(p^* \in \Omega\) of \(f\) such that for \(X\)
with density \(p^*\), there exits \(i \geq a\) such that \(X\) is a
mixture of a uniform over \(\{0, 1, \ldots, i\}\) with either 1) a point
mass at 0, or 2) a uniform over \(\{0, \ldots, i + 1\}\).

\end{lemma}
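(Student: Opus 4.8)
The plan is to reduce the problem to a finite-dimensional linear program and push its optimum to a vertex. By the previous lemma there is a maximizer whose $D$-representation $d = (d_j)$ is supported on $\{0, i, i+1\}$ for some $i \geq a$; write its three weights as $d_0, d_i, d_{i+1}$. Since these define a distribution for $D$ with $\mathbb{E}[D] = 2\mu$ (from $\mathbb{E}[X] = \mathbb{E}[D]/2$), they satisfy the two linear constraints $d_0 + d_i + d_{i+1} = 1$ and $i\,d_i + (i+1)\,d_{i+1} = 2\mu$. Because $Y \mid D$ is uniform, the tail probability is the linear functional
\[
f = d_i\,\frac{i - a + 1}{i + 1} + d_{i+1}\,\frac{i - a + 2}{i + 2},
\]
the contribution at $D = 0$ vanishing since $a \geq 1$. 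If one of $d_0, d_i, d_{i+1}$ is already $0$ there is nothing to prove, so assume all three are strictly positive.

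First I would exhibit the essentially unique perturbation direction preserving both the total mass and the mean, namely $v = (1, -(i+1), i)$ in the coordinates $(d_0, d_i, d_{i+1})$; one checks directly that $v_0 + v_i + v_{i+1} = 0$ and $i\,v_i + (i+1)\,v_{i+1} = 0$, so $d + tv$ remains feasible for all small $|t|$. Next I would compute the effect on $f$, which a short calculation collapses to
\[
f(d + tv) - f(d) = t\,\frac{2a - 2 - i}{i + 2}.
\]

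Then the optimality argument finishes the job. Since $d$ is a maximizer and all three coordinates are strictly positive, both signs of $t$ are admissible for small $|t|$, so the displayed increment must be $\leq 0$ in each direction; this forces the coefficient $2a - 2 - i$ to vanish and hence $f$ to be constant along the entire feasible line. I would then increase $|t|$ in whichever direction until one of $d_0, d_i, d_{i+1}$ first reaches $0$, which keeps the distribution feasible and maximal while eliminating a support point. The three possible vanishing coordinates give $D$-supports $\{0, i+1\}$, $\{i, i+1\}$, and $\{0, i\}$, which translate back through the mixture lemma to a uniform over $\{0, \ldots, i+1\}$ plus a point mass at $0$, two adjacent uniforms, and a uniform over $\{0, \ldots, i\}$ plus a point mass at $0$, respectively --- exactly the two forms in the statement (the first support using index $i+1 \geq a$ in place of $i$).

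The main obstacle I anticipate is purely bookkeeping: getting the mass- and mean-preserving direction right and carrying the algebra through so that the increment collapses to the clean factor $2a - 2 - i$; an error there would obscure the optimality step, which is otherwise immediate once $\Delta f$ is seen to be linear in $t$. A secondary point to check carefully is that each reduced two-point support still uses indices $\geq a$ and matches one of the two advertised cases, which holds since we only ever manipulate the weights at $0$, $i$, and $i+1$ with $i \geq a$.
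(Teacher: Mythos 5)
Your proof is correct, and it is more than a stylistic variant of the paper's argument: it actually repairs an error in it. The paper perturbs along the same mass- and mean-preserving direction (its update \(d_0' = d_0 - m/i\), \(d_i' = d_i + m(1 + 1/i)\), \(d_{i+1}' = d_{i+1} - m\) is exactly your \(d + tv\) with \(t = -m/i\)), but it then evaluates the effect of the perturbation on \(\mathbb{P}(D \geq a)\), the tail of the \emph{mixing} variable, rather than on the objective \(f(p) = \mathbb{P}(X \geq a)\), and concludes that the perturbation always strictly increases the objective. It does not: by your computation, the change in \(f\) along the paper's direction is \(\frac{m}{i}\cdot\frac{i - 2a + 2}{i+2}\), which is zero at \(i = 2a-2\) and strictly \emph{negative} whenever \(a \leq i < 2a-2\) (a nonempty range for every \(a \geq 3\)), so the paper's strict-improvement contradiction breaks down exactly there. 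Your linear-programming treatment handles this correctly: at a maximizer with all three weights positive the directional derivative \((2a-2-i)/(i+2)\) must vanish, forcing \(i = 2a - 2\), after which \(f\) is constant along the feasible segment and you may slide to an endpoint where one of \(d_0, d_i, d_{i+1}\) vanishes, yielding a maximizer of the advertised form without ever needing strict improvement. Reassuringly, your finding that interior critical points occur only at \(i = 2a-2\) is consistent with the paper's next lemma, which locates the optimum of the point-mass-plus-uniform family at \(i \in \{2a-2, 2a-1\}\). Your bookkeeping --- the direction \(v = (1, -(i+1), i)\), the increment \(t(2a-2-i)/(i+2)\), and the translation of the three possible two-point supports back into the two cases of the statement (with index \(i+1\) playing the role of \(i\) when \(d_i\) is the coordinate that vanishes) --- all checks out.
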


\begin{proof}

By the last lemma there exists an \(X\) that is the mixture of all
three. If for this \(X\), any one of \(d_0, d_i, d_{i + 1}\) is 0, then
the result is immediate. So suppose that all three are positive.

Let \(m = \min (d_0 i, d_{i + 1})\). Then set \begin{align*}
d'_0       &= d_0 - m / i \\
d'_i       &= d_i + m(1 + 1 / i) \\
d'_{i + 1} &= d_{i + 1} - m.
\end{align*}

Then \(d'_0 + d'_i + d'_{i + 1} = 1\) and all are still nonnegative.
Moreover, if \(D\) has density \(d\) and \(D'\) has density \(d'\), \[
\mathbb{E}[D'] = \mathbb{E}[D] - (0)(m / i) + i (m)(1 + 1 / i) - (i + 1)(m) = \mathbb{E}[D].
\]

Also, \[
\mathbb{P}(D' \geq a) = \mathbb{P}(D \geq a) + m(1 + 1 / i) - m > \mathbb{P}(D \geq a),
\] Hence the original \(X\) could not be a maximizer of \(f\), and any
maximizer must have at most two of \(\{d_0, d_i, d_{i + 1}\}\) positive.

\end{proof}

\begin{lemma}

For \(X\) a mixture of a point mass at 0 and uniform over
\(\{0, 1, \ldots, i\}\) with mean \(\mu\), for \(a\) a positive integer,
\[
\mathbb{P}(X \leq a) \leq \frac{\mu}{2a - 1}.
\]

\end{lemma}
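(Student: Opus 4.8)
The plan is to reduce the statement to an arithmetic inequality in a single integer. (I read the conclusion as $\mathbb{P}(X \geq a) \leq \mu/(2a-1)$, which is what is consistent with maximizing $f(p) = \sum_{j \geq a} p_j$.) Parametrize the mixture by the weight $q \in [0,1]$ on the point mass at $0$ and the weight $1 - q$ on the discrete uniform over $\{0, 1, \ldots, i\}$, so that $\mathbb{P}(X = k) = (1 - q)/(i + 1)$ for $k \in \{1, \ldots, i\}$. First I would compute the two functionals in closed form: the mean is $\mu = (1 - q)\, i / 2$, and since the point mass at $0$ cannot reach any level $a \geq 1$, the tail is $\mathbb{P}(X \geq a) = (1 - q)(i - a + 1)/(i + 1)$ when $a \leq i$ (and $0$ otherwise, where the bound is trivial).

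Next I would eliminate the weight. Solving the mean relation gives $1 - q = 2\mu / i$, and substituting yields $\mathbb{P}(X \geq a) = 2\mu\,(i - a + 1)/[i(i+1)]$. The desired inequality then reduces, after dividing by $\mu > 0$ and clearing the positive denominators, to the purely integer statement $\psi(i) \geq 0$, where $\psi(i) = i(i+1) - 2(2a - 1)(i - a + 1)$.

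The decisive step is to analyze $\psi$. Expanding gives $\psi(i) = i^2 + (3 - 4a)\,i + (4a^2 - 6a + 2)$, a quadratic in $i$ whose discriminant turns out to equal exactly $1$; hence it factors as $\psi(i) = (i - (2a - 2))(i - (2a - 1))$. I expect this factorization to be the crux. The two roots $2a - 2$ and $2a - 1$ are \emph{consecutive integers}, so for every integer $i$ the two factors share a sign: both are $\leq 0$ when $i \leq 2a - 2$ and both are $\geq 0$ when $i \geq 2a - 1$, forcing $\psi(i) \geq 0$ in either case. The only place $\psi$ is negative is the open interval $(2a - 2,\, 2a - 1)$, which contains no integer at all.

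This is precisely where discreteness of $X$ is used, and it explains why the discrete constant is $2a - 1$ rather than the continuous $2a$ of Theorem~1: the real-variable relaxation of $\psi$ does go negative on $(2a - 2, 2a - 1)$, so the continuous optimum of the tail genuinely exceeds $\mu/(2a - 1)$, but no integer top-index $i$ can land in that gap. The main obstacle is therefore not the algebra but recognizing that integrality of $i$ is exactly what rescues the bound; once the discriminant is seen to be $1$ and the factorization is in hand, the conclusion is immediate, with equality attained at the endpoints $i = 2a - 2$ and $i = 2a - 1$.
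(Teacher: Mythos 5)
Your proof is correct, and it shares its skeleton with the paper's: you both parametrize the mixture by the top index $i$, eliminate the mixture weight via the mean (your $1-q = 2\mu/i$ is the paper's $d_i = 2\mu/i$), and arrive at the identical closed form $\mathbb{P}(X \geq a) = \frac{2\mu}{i}\cdot\frac{i-a+1}{i+1}$; you were also right to read the lemma's $\mathbb{P}(X \leq a)$ as a typo for $\mathbb{P}(X \geq a)$. The difference is in how the two proofs finish. The paper runs an optimization over $i$: it forms the ratio $r(i) = \mathbb{P}_{i+1}(X \geq a)/\mathbb{P}_{i}(X \geq a)$, determines where $r(i)-1$ changes sign, concludes the tail is maximized at the consecutive integers $i = 2a-2$ and $i = 2a-1$, and evaluates the maximum there to get $\mu/(2a-1)$. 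You instead verify the bound at every integer $i$ simultaneously: clearing positive denominators turns the claim into $\psi(i) = i(i+1) - 2(2a-1)(i-a+1) \geq 0$, and the discriminant-$1$ factorization $\psi(i) = (i-(2a-2))(i-(2a-1))$ makes this automatic, since a quadratic whose roots are consecutive integers is nonnegative at every integer. The two arguments hinge on the same two special values of $i$, but they buy different things: the paper's ratio test exhibits the maximizing distributions explicitly (which is what the tightness assertion in Theorem 2 rests on), whereas your factorization is a global, monotonicity-free verification that isolates exactly where integrality enters --- and it incidentally sidesteps a sign slip in the paper, whose displayed identity should read $\operatorname{sgn}(r(i)-1) = \operatorname{sgn}(2(a-1)-i)$, though the paper's conclusion is unaffected. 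One caution about your closing aside: the relaxation in which $i$ ranges over the reals is purely formal, and the fact that its supremum exceeds $\mu/(2a-1)$ has no bearing on Theorem 1's genuinely continuous setting, where the sharp constant $\mu/(2a)$ is \emph{smaller} than $\mu/(2a-1)$; as phrased, the remark invites that conflation, so it should be either reworded or dropped (the proof does not need it).
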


\begin{proof}

Suppose for \(X\), \(d_0\) and \(d_i\) are positive, and all other
\(d_j = 0\). Then \(d_i = 2 \mu / i\), and for a given \(i\), \[
\mathbb{P}_i(X \geq a) = \frac{2 \mu}{i} \cdot \frac{i - (a - 1)}{i + 1}.
\] To see what value of \(i\) makes this as large as possible, consider
\[
r(i) = \frac{\mathbb{P}_{i + 1}(X \geq a)}{\mathbb{P}_{i}(X \geq a)} = \frac{i + 1 - (a - 1)}{i + 2} \cdot \frac{i}{i - (a - 1)}.
\] Then \[
\operatorname{sgn}(r(i) - 1) = \operatorname{sgn}(i - 2(a - 1)).
\]

Hence for integer \(a\) there is a maximum of \(\mathbb{P}_i(X \leq a)\)
at \(i = 2a - 2\) and \(i = 2a - 1\). In either case the maximum value
is \[
\mathbb{P}_{2a - 1}(X \geq a) = \mathbb{P}_{2a - 2}(X \geq a) = \frac{2\mu(a - 1)}{2(a - 1)[2(a - 1) + 1]} = \frac{\mu}{2a - 1}.
\]

\end{proof}

This leads to our last lemma concerning Markov's inequality for discrete
random varables.

\begin{lemma}

The maximum value of \(f(p)\) over \(p \in \Omega\) is \[
\frac{\mu}{2a - 1}.
\]

\end{lemma}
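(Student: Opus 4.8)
The plan is to combine the structural reduction of Lemma 4 with the extremal computation hidden inside Lemma 5. By Lemma 4 there is a maximizer $p^*\in\Omega$ whose associated $X$ (equivalently, the mixing variable $D$ of Lemma 2, where $d_j=(j+1)(p^*_j-p^*_{j+1})$) is supported in one of two ways: either $D$ lives on $\{0,i\}$, so $X$ is a point mass at $0$ mixed with $\textsf{Unif}(\{0,\ldots,i\})$, or $D$ lives on $\{i,i+1\}$, so $X$ is a mixture of $\textsf{Unif}(\{0,\ldots,i\})$ and $\textsf{Unif}(\{0,\ldots,i+1\})$, for some integer $i\geq a$. So it suffices to bound $f(p^*)=\mathbb{P}(X\geq a)$ in each case and to exhibit a distribution meeting the bound.

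The engine for both cases is a single per-uniform inequality, already implicit in the sign computation of Lemma 5. For an integer $j$ write $q_j:=\mathbb{P}(\textsf{Unif}(\{0,\ldots,j\})\geq a)$. I claim that for every integer $j\geq 0$, \[ q_j\leq \frac{j}{2(2a-1)}. \] For $j<a$ the left side is $0$ and this is trivial; for $j\geq a$ it reads $(j-(a-1))/(j+1)\leq j/[2(2a-1)]$, and clearing the positive denominators turns it into $(j-(2a-2))(j-(2a-1))\geq 0$. This holds for every integer $j$ because the two factors are consecutive integers (so they cannot have strictly opposite signs), with equality exactly at $j\in\{2a-2,2a-1\}$ — precisely the optimal $i$ found in Lemma 5.

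Now I would average this inequality against the law of $D$. From Lemma 2, $\mathbb{P}(X\geq a)=\sum_j d_j q_j$ and $\mu=\mathbb{E}[D]/2=\tfrac12\sum_j d_j\,j$, so \[ f(p^*)=\sum_j d_j q_j\;\leq\;\sum_j d_j\frac{j}{2(2a-1)}=\frac{\mathbb{E}[D]}{2(2a-1)}=\frac{\mu}{2a-1}. \] This one line handles both configurations at once: in the point‑mass‑plus‑uniform case it merely restates Lemma 5, and in the two‑uniform case it applies the engine inequality at $j=i$ and $j=i+1$ and recombines with weights $d_i,d_{i+1}$. The only genuine obstacle is this second case, which Lemma 5 does not cover directly; the resolution is exactly the observation that Lemma 5 is the $D$-averaged form of the term-by-term bound $(j-(2a-2))(j-(2a-1))\geq 0$, after which both support patterns allowed by Lemma 4 drop out of the same averaging step. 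Hence the maximum of $f$ over $\Omega$ is at most $\mu/(2a-1)$.

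To see the value is attained, I would exhibit equality. Taking $D$ to be a point mass at $2a-1$ makes $X\sim\textsf{Unif}(\{0,\ldots,2a-1\})$, for which $\mu=(2a-1)/2$ and $\mathbb{P}(X\geq a)=a/(2a)=1/2=\mu/(2a-1)$; more generally any $D$ supported on $\{0,2a-2,2a-1\}$ keeps $f=\mu/(2a-1)$, since those are exactly the indices where the engine inequality is tight, and every nonnegative $d$ yields a legitimate decreasing density. Combining the upper bound with this attained value gives that the maximum of $f$ over $\Omega$ equals $\mu/(2a-1)$, completing the proof.
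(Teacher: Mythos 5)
Your proof is correct, but it takes a genuinely different route from the paper's. The paper runs the full variational program: existence of a maximizer by compactness, the structural lemmas showing a maximizer can be taken to be a mixture of at most two point-mass-plus-uniform distributions, a Lagrange-multiplier/boundary argument to collapse that two-component mixture to a single component, and finally the point-mass-plus-uniform computation to evaluate the maximum. You instead prove a single term-by-term inequality, \(\mathbb{P}(\textsf{Unif}(\{0,\ldots,j\}) \geq a) \leq j/(2(2a-1))\) for every integer \(j \geq 0\), by factoring it as \((j-(2a-2))(j-(2a-1)) \geq 0\) (a product of two integers differing by one), and then average it against the mixing law \(d_j = (j+1)(p_j - p_{j+1})\) from the mixture-of-uniforms representation, using \(\mathbb{E}[D] = 2\mu\). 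This is not only shorter but strictly more economical: your averaging step bounds \(f(p)\) by \(\mu/(2a-1)\) for \emph{every} \(p \in \Omega\), not just for structured maximizers, so your opening appeal to the two-uniform reduction lemma is actually superfluous, as are the compactness argument, the equal-values-up-to-\(a\) lemma, the adjacent-uniforms lemma, and the paper's Lagrange computation (which is in any case garbled by typos in the constraint and gradient). What the paper's approach buys in exchange is a description of the extremal distributions themselves, which has some independent interest; what yours buys is a one-line proof of the upper bound plus a clean identification of all equality cases, namely \(D\) supported on \(\{0, 2a-2, 2a-1\}\). One caveat, which you share with the paper rather than introduce: attainment of the value \(\mu/(2a-1)\) requires \(\mu \leq (2a-1)/2\), since a mixing law supported on \(\{0, 2a-2, 2a-1\}\) can only realize means up to \((2a-1)/2\); for larger fixed \(\mu\) the supremum over \(\Omega\) is strictly smaller than \(\mu/(2a-1)\), so the lemma as stated (with \(\mu\) fixed in the definition of \(\Omega\)) is really only tight in the range \(\mu \leq (2a-1)/2\), exactly as in the paper's own proof.
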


\begin{proof}

Earlier it was shown that there exists a maximizer that is the mixture
of at most two \(X_1\) and \(X_2\) which are themselves a mixture of at
most a point mass at zero and a uniform over \(\{0, 1, \ldots, i\}\) for
some \(i\). Let \(X\) equal \(X_1\) with probability \(p_1\), and
\(X_2\) with probability \(p_2\). Then from the previous lemma, our goal
is to maximize \[
\mathbb{P}(X \geq a) = p_1 \mathbb{P}(X_1 \geq a) + p_2 \mathbb{P}(X_2 \geq a) \leq p_1 \frac{\mu_1}{2a - 1} + p_2 \frac{\mu_2}{2a - 1}.
\] subject to \begin{align*}
p_1 \mu + p_2 \mu &= \mu \\
p_1 + p_2 &= 1 \\
p_1, p_2 &\geq 0.
\end{align*} This is a continuous function over a compact space, so the
maximum either occurs at a boundary (where one of \(p_1\) or \(p_2\) is
0) or at a Lagrange critical point. These occur for choice of
\(\lambda_1\) and \(\lambda_2\) such that \begin{align*}
0 &= \nabla(p_1 \mu_1 / (2a - 1) + p_2 \mu_2 / (2a - 1)) + \lambda_1 \nabla(p_1 \mu_1 + p_2 \mu_2) + \lambda_2 \nabla(p_1 + p-2) \\
&= (\mu_1, \mu_2)(\lambda_1 + 1 / (2a - 1)) + (1, 1)\lambda_2 = 0.
\end{align*}

Hence there are only critical points if \(\mu_1 = \mu_2\). In this case,
\(X_1 \sim X_2\) and so the mixture is unneeded. For
\(X_1 \not\sim X_2\), the optimal value occurs at \(p_1 = 0\) or
\(p_2 = 0\) and the mixture resolves down to a single distribution.

The previous lemma then gives the bound.

\end{proof}

This proves Theorem 2.

Applied to our initial example where
\(U \sim \textsf{Unif}(\{0, 1, 2, \ldots, 9, 10\})\), this gives \[
\mathbb{P}(U \geq 9) \leq \frac{5}{2(9) - 1} = \frac{5}{17} \approx 0.2941\ldots,
\] much closer to the true answer of \(0.1818\ldots\).

\hypertarget{proof-of-theorem-3}{%
\section{Proof of Theorem 3}\label{proof-of-theorem-3}}

\begin{proof}

Let \(X\) be a discrete integer-valued random variable with finite
second moment, and let \(U \sim \textsf{Unif}([-0.5, 0.5])\) be
independent of \(X\). Then \(X + U\) is a continuous random variable
with unimodal density. For \(X \geq a\), \(X + U \geq a - 1 / 2\) and
for \(X \leq a\), \(X+ U \leq a + 1 / 2\) suffices. Hence \begin{align*}
\mathbb{P}(|X - \mathbb{E}(X)| \geq a) &= \mathbb{P}(|X + U - \mathbb{E}(X)| \geq a - 1 / 2) \\
   &\leq \frac{1}{2}\cdot \frac{\mathbb{V}(X + U)}{(a - 1 / 2)^2} \\
   &\leq \frac{1}{2}\cdot \frac{\mathbb{V}(X) + \mathbb{V}(U)}{(a - 1 / 2)^2} \\
   &\leq \frac{1}{2}\cdot \frac{\mathbb{V}(X) + 1 / 12}{(a - 1 / 2)^2}.
\end{align*}

\end{proof}

\hypertarget{refs}{}
\leavevmode\hypertarget{ref-huber2019f}{}%
Huber, M. 2019. ``Halving the Bounds for the Markov, Chebyshev, and
Chernoff Inequalities Through Smoothing.'' \emph{American Mathematical
Monthly} 126 (10): 915--27.

\leavevmode\hypertarget{ref-markov1884}{}%
Markov, A. 1884. ``On Certain Applications of Algebraic Continued
Fractions.'' PhD thesis, St. Petersburg University.

\leavevmode\hypertarget{ref-motwanir1995}{}%
Motwani, R., and P. Raghavan. 1995. \emph{Randomized Algorithms}.
Cambridge Univ. Press.

\leavevmode\hypertarget{ref-tchebichef1867}{}%
Tchebichef, P. 1867. ``Des Valeurs Moyennes.'' \emph{Journal de
Mathématiques Pures et Appliquées} 2 (12): 177--84.

\leavevmode\hypertarget{ref-williamson2011}{}%
Williamson, D. P., and D. B. Shmoys. 2011. \emph{The Design of
Approximation Algorithms}. Cambridge University Press.

\end{document}